\documentclass[12pt]{amsart}
\topmargin -1.3cm
\textwidth 160mm
\textheight 230mm
\oddsidemargin 0.6cm
\evensidemargin 0.6cm
\usepackage[utf8]{inputenc}
\usepackage{graphicx}
\usepackage{epstopdf}
\usepackage{inputenc}
\usepackage{enumitem}
\usepackage{amsmath,fullpage}
\usepackage{amssymb}
\usepackage{emptypage}
\usepackage{mathtools}
\usepackage{float}
\usepackage[dvipsnames,x11names,svgnames]{xcolor}
\usepackage[colorlinks=true,linkcolor=NavyBlue,citecolor=DarkGreen, urlcolor=blue]{hyperref}
\usepackage{dsfont}

\newtheorem{theorem}{theorem}[section]
\newtheorem{exa}[theorem]{Example}
\newtheorem{thm}[theorem]{Theorem}
\newtheorem*{theorem*}{Theorem}
\newtheorem{defn}[theorem]{Definition}
\newtheorem{proposition}[theorem]{Proposition}

\newtheorem{corollary}[theorem]{Corollary}
\newtheorem{lemma}[theorem]{Lemma}

\newtheorem*{acknowledgements*}{Acknowledgements}
\newtheorem{remark}[theorem]{Remark}

\newcommand{\K}{\mathbb{K}}

\makeatletter
\def\blfootnote{\gdef\@thefnmark{}\@footnotetext}
\makeatother

\def\house#1{\setbox1=\hbox{$\,#1\,$}%
\dimen1=\ht1 \advance\dimen1 by 2pt \dimen2=\dp1 \advance\dimen2 by 2pt
\setbox1=\hbox{\vrule height\dimen1 depth\dimen2\box1\vrule}%
\setbox1=\vbox{\hrule\box1}%
\advance\dimen1 by .4pt \ht1=\dimen1
\advance\dimen2 by .4pt \dp1=\dimen2 \box1\relax}

\setlength{\parindent}{0pt}

\begin{document}
	\title[]{On Groups of Linear Fractional Transformations Stabilizing Finite Sets of Four Elements}

\author[P. Nyadjo Fonga]{{Patrick Nyadjo Fonga}}
\address{Bowling Green State university } 
\email{ nyadjop@bgsu.edu }

\begin{abstract}
Let \( E \) be a subset of the projective line over a commutative field \(\mathbb{K}\). When \(\mathbb{K}\) has infinite cardinality, it is well known that if \(E\) contains at most three elements, then the group of linear fractional transformations preserving \(E\) is either infinite or isomorphic to the symmetric group on three elements. In this work, we investigate the case where \(E\) consists of four elements. We show that the group of projective linear transformations stabilizing \(E\) is, depending on the characteristic of the field \(\mathbb{K}\), isomorphic to either the Klein four-group \(V_4\), the dihedral group \(D_4\) of order eight, the alternating group \(\mathfrak{A}_4\) of order twelve, or the symmetric group \(\mathfrak{S}_4\) of order twenty-four.

\medskip

\noindent\textbf{Keywords:} linear fractional transformations; homographies; Möbius transformations.

\noindent\textbf{AMS 2020 classification:} 15A04; 51N15; 20G15; 51B10; 20B25.
\end{abstract}

\maketitle

\section{Introduction}

Let \( E \) be a vector space of dimension \( n + 1 \) over a field \(\mathbb{K}\), and throughout this paper, we assume that \(\mathbb{K}\) is commutative. The projective space associated with \( E \), denoted by \( \mathbb{P}(E) \), is defined as the set of all one-dimensional linear subspaces of \( E \). 

In the particular case where \( V = \mathbb{K}^2 \), the associated projective space \( \mathbb{P}(V) \) is called the \emph{projective line} over \(\mathbb{K}\), and is denoted by \( \mathbb{P}^1(\mathbb{K}) \). This projective line can be identified with the set \( \mathbb{K} \cup \{\infty\} \), where \( \infty \) represents a point not belonging to \(\mathbb{K}\).

If \( \mathbb{P}(V) \) and \( \mathbb{P}(W) \) are projective spaces of the same dimension over the same field \(\mathbb{K}\), then any vector space isomorphism \( f: V \to W \) induces a bijection from \( \mathbb{P}(V) \) to \( \mathbb{P}(W) \), preserving the incidence structure. This induced map is called a \emph{homography} or a \emph{projective transformation}. 

In the case of the projective line \( \mathbb{P}^1(\mathbb{K}) \), homographies correspond to \emph{linear fractional transformations} (or \emph{homographic functions}), which are functions \( \mathbb{P}^1(\mathbb{K}) \to  \mathbb{P}^1(\mathbb{K}) \) of the form
\[
z \mapsto \begin{cases} 
	\dfrac{az + b}{cz + d} \ \ \text{for } z \in \mathbb{ K} \text{  and  } cz+d \neq 0, \\
	\infty \ \ \text{for } z \in \mathbb{ K} \text{  and  } cz+d = 0, \\
	a/c \ \ \text{for } z = \infty \text{  and  } c \neq 0, \\
	\infty \ \ \text{for } z =\infty \text{  and  } c = 0.
\end{cases},\text{with } a, b, c, d \in \mathbb{K} \text{ and } ad - bc \neq 0.
\]
These transformations form a group under composition, denoted by \( \mathrm{PGL}_2(\mathbb{K}) \), the projective general linear group.\\
If \(\K\) is different from the binary field \(\mathbb{F}_2\), then for any \( \lambda \in \mathbb{K} \) with \( \lambda \neq 0 \) and \( \lambda \neq 1 \), we define the group \( \mathcal{G}_\lambda \) of homographies stabilizing the subset \( \{\infty, 0, 1, \lambda\} \) of \(\mathbb{P}^1(\mathbb{K}) \). More generally,
Given a subset \( E \subset \mathbb{P}^1(\mathbb{K}) \), we define the stabilizer subgroup
\[
G_E = \{ h \in \mathrm{PGL}_2(\mathbb{K}) \mid h(E) = E \},
\]
called the group of homographies associated with \( E \). When \( \K \) is infinite and \( E \) contains at most three elements, it is well known that \( G_E \) is either infinite or isomorphic to the symmetric group \( \mathfrak{S}_3 \); further details are given in Remark~\ref{remk1}.

In this paper, we focus on the case where \( E \) consists of exactly four elements. Our main result describes the structure of \( G_E \) in this situation, depending on the characteristic of the field \(\mathbb{K}\). It is important to note for what follows that when the characteristic of \( \K \) is \( 3 \), the polynomial \( X^2 + X + 1 \) splits in \( \K[X] \) as \( (X - 1)^2 \). If the characteristic is different from \( 3 \) and the polynomial \( X^2 + X + 1 \) splits in \( \K[X] \), we denote by \( j \) and \( j^2 \) its roots in \( \K \). These roots are distinct and satisfy \( j^3 = 1 \) and \( j \ne 1 \).

\begin{thm} \label{main}
Let \(\mathbb{K}\) be a field different from \(\mathbb{F}_2\), and let \(\lambda \in \mathbb{K}\) with \(\lambda \neq 0\) and \(\lambda \neq 1\). Then \(\mathcal{G}_\lambda\) is isomorphic to the Klein four-group \(V_4\), except in the following cases:
\begin{enumerate}[label=(\roman*), leftmargin=*, itemsep=1ex]
    \item If \(\mathrm{char}(\mathbb{K}) = 3\) and \(\lambda = -1\), then \(\mathcal{G}_\lambda\) is isomorphic to the symmetric group \(\mathfrak{S}_4\) of 24 elements.
    
    \item If \(\mathrm{char}(\mathbb{K}) = 2\), and the polynomial \(X^2 + X + 1\) splits in \(\mathbb{K}[X]\), and \(\lambda \in \{ j, j^2 \}\), then \(\mathcal{G}_\lambda\) is isomorphic to the alternating group \(\mathfrak{A}_4\) of order 12.
    
    \item If \(\mathrm{char}(\mathbb{K}) \neq 2\) and \(\mathrm{char}(\mathbb{K}) \neq 3\), and \(\lambda \in \{-1, 2, 1/2\}\), then \(\mathcal{G}_\lambda\) is isomorphic to the dihedral group \(D_4\) of 8 elements.
    
    \item If \(\mathrm{char}(\mathbb{K}) \neq 2\) and \(\mathrm{char}(\mathbb{K}) \neq 3\), and the polynomial \(X^2 + X + 1\) splits in \(\mathbb{K}[X]\), and \(\lambda \in \{ -j, -j^2 \}\), then \(\mathcal{G}_\lambda\) is isomorphic to \(\mathfrak{A}_4\).
\end{enumerate}
\end{thm}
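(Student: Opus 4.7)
The plan is to reduce the problem to finite-group theory by letting $\mathcal{G}_\lambda$ act on $E = \{\infty, 0, 1, \lambda\}$ and tracking how this action interacts with the cross-ratio. Since $\mathrm{PGL}_2(\mathbb{K})$ acts sharply $3$-transitively on $\mathbb{P}^1(\mathbb{K})$, any element of $\mathcal{G}_\lambda$ is determined by its effect on three of the four points; consequently the restriction map $\mathcal{G}_\lambda \to \mathfrak{S}_E \cong \mathfrak{S}_4$ is injective, and the task reduces to identifying its image as a concrete subgroup of $\mathfrak{S}_4$.

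Using that $[\infty, 0, 1, \lambda] = \lambda$ and that the cross-ratio is a complete invariant of ordered four-tuples under $\mathrm{PGL}_2(\mathbb{K})$, a permutation $\pi \in \mathfrak{S}_4$ is realised by $\mathcal{G}_\lambda$ if and only if permuting the tuple $(\infty, 0, 1, \lambda)$ according to $\pi$ preserves its cross-ratio. The six possible values of the cross-ratio under an arbitrary permutation are the classical
\[
\lambda,\quad \tfrac{1}{\lambda},\quad 1-\lambda,\quad \tfrac{1}{1-\lambda},\quad \tfrac{\lambda-1}{\lambda},\quad \tfrac{\lambda}{\lambda-1},
\]
and the kernel of the $\mathfrak{S}_4$-action on this orbit is exactly the Klein four-group $V_4$ of double transpositions. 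Hence $V_4 \subseteq \mathcal{G}_\lambda$ unconditionally, and $[\mathcal{G}_\lambda : V_4]$ equals the multiplicity of $\lambda$ in the list of six expressions, equivalently the order of the stabiliser of $\lambda$ under the induced $\mathfrak{S}_3$-action on cross-ratios.

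The heart of the argument is a case analysis of when these six expressions coincide. A brief calculation gives: $\lambda = 1/\lambda$ forces $\lambda = -1$; $\lambda = 1 - \lambda$ forces $\lambda = 1/2$; $\lambda = \lambda/(\lambda-1)$ forces $\lambda = 2$; and $\lambda = 1/(1-\lambda)$ (equivalently $\lambda = (\lambda-1)/\lambda$) forces $\lambda^2 - \lambda + 1 = 0$, whose roots are $-j, -j^2$ in characteristic different from $2, 3$, and $j, j^2$ in characteristic $2$ since there $X^2 - X + 1 = X^2 + X + 1$. Case by case: in characteristic distinct from $2$ and $3$, each of $\lambda \in \{-1, 1/2, 2\}$ collapses the six values to three, while $\lambda^2 - \lambda + 1 = 0$ collapses them to two; in characteristic $2$, the first three conditions reduce to excluded values $\lambda \in \{0, 1\}$, leaving only $\lambda^2 + \lambda + 1 = 0$; and in characteristic $3$, the identity $X^2 - X + 1 = (X+1)^2$ means the two collapse conditions coincide precisely at $\lambda = -1$, flattening all six values to a single value.

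Matching the resulting index $[\mathcal{G}_\lambda : V_4] \in \{1, 2, 3, 6\}$ against the subgroup lattice of $\mathfrak{S}_4$, whose subgroups containing $V_4$ are precisely $V_4$, $D_4$, $\mathfrak{A}_4$, $\mathfrak{S}_4$ of respective indices $6, 3, 2, 1$, recovers the four isomorphism types listed in the theorem. The main obstacle I anticipate is the careful bookkeeping of the case analysis: verifying for each characteristic which of the collapse conditions are available, which exceptional values they produce, and in particular confirming that characteristic $3$ is the unique setting in which the two sporadic conditions merge on a single admissible $\lambda$, giving $\mathfrak{S}_4$ rather than only $D_4$ or $\mathfrak{A}_4$.
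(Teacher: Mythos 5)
Your proposal is correct and takes essentially the same route as the paper: it derives the unconditional Klein four-group from the invariance of the cross-ratio under double transpositions (the paper's Proposition~\ref{pro:4.3.5}, reproved this way in the remark following Lemma~\ref{lemma23}), solves the same five coincidence equations among the six cross-ratio values characteristic by characteristic (the paper's Proposition~\ref{prop22}), and identifies \(\mathcal{G}_\lambda\) from its index among the subgroups of \(\mathfrak{S}_4\) containing \(V_4\). Your multiplicity-of-\(\lambda\)/stabiliser bookkeeping is just the orbit--stabiliser restatement of the paper's count of distinct values \(c_1,\dots,c_6\), so the two arguments coincide in substance.
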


\begin{corollary}\label{mainc}
    Let \( E \subset \mathbb{P}^1(\mathbb{K}) \) be a subset with four distinct elements. Then \( G_E \) is isomorphic to one of the following groups: 
    the Klein four-group \( V_4 \) of order 4, the dihedral group \( D_4 \) of order 8, the alternating group \(\mathfrak{A}_4\) of order 12, or the symmetric group \( \mathfrak{S}_4 \) of order 24. 
\end{corollary}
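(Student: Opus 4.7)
The plan is to reduce the general case to the normalized case already handled by Theorem~\ref{main}. First I would note that for $E$ to contain four distinct elements, the projective line must have at least four points, which rules out $\mathbb{K}=\mathbb{F}_2$ (there $|\mathbb{P}^1(\mathbb{K})|=3$); hence we may assume $\mathbb{K}\neq\mathbb{F}_2$ and Theorem~\ref{main} becomes applicable.

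Next I would invoke the classical fact that $\mathrm{PGL}_2(\mathbb{K})$ acts sharply 3-transitively on $\mathbb{P}^1(\mathbb{K})$: given any three distinct points, there is a unique homography sending them to $\infty,0,1$ in a prescribed order. Writing $E=\{a,b,c,d\}$, choose $h\in\mathrm{PGL}_2(\mathbb{K})$ with $h(a)=\infty$, $h(b)=0$, $h(c)=1$, and set $\lambda:=h(d)$. Since $h$ is a bijection and $d\notin\{a,b,c\}$, we have $\lambda\in\mathbb{K}\setminus\{0,1\}$, so $h(E)=\{\infty,0,1,\lambda\}$ is a valid four-element set of the form treated in Theorem~\ref{main}.

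Then I would observe that conjugation by $h$ carries $G_E$ onto $G_{h(E)}=\mathcal{G}_\lambda$: indeed, for any $g\in\mathrm{PGL}_2(\mathbb{K})$, one has $g(E)=E$ if and only if $(hgh^{-1})(h(E))=h(E)$. Since conjugation by an element of $\mathrm{PGL}_2(\mathbb{K})$ is an automorphism of that group, this gives a group isomorphism $G_E\cong\mathcal{G}_\lambda$.

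The conclusion is then immediate: by Theorem~\ref{main}, $\mathcal{G}_\lambda$ is isomorphic to one of $V_4$, $D_4$, $\mathfrak{A}_4$, or $\mathfrak{S}_4$ depending on $\mathrm{char}(\mathbb{K})$ and the value of $\lambda$, and the same therefore holds for $G_E$. There is no genuine obstacle here — the only subtlety is making explicit that the isomorphism class of $\mathcal{G}_\lambda$ is what controls $G_E$, which is why the choice of the particular triple sent to $(\infty,0,1)$ does not matter (different choices produce $\lambda$'s in the same orbit under the anharmonic group, all of which yield conjugate, hence isomorphic, stabilizers).
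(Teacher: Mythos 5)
Your proof is correct and follows essentially the same route as the paper: the paper's proof reduces to Theorem~\ref{main} via the conjugation argument of Lemma~\ref{lemma222} (the text cites Lemma~\ref{lemma2}, the three-point case, but this is evidently a typo), whose proof is exactly your construction of $h$ sending three points of $E$ to $\infty,0,1$ and setting $\lambda=h(x_4)$. Your explicit observation that $\mathbb{K}=\mathbb{F}_2$ is automatically excluded because $|\mathbb{P}^1(\mathbb{F}_2)|=3$ is a small but welcome addition that the paper leaves implicit.
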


The following corollary applies to the field of rational numbers.

\begin{corollary} \label{coro:4.3.7}
Let \( E = \{x_1, x_2, x_3, x_4\} \), where \( x_1, x_2, x_3, \) and \( x_4 \) are four distinct rational numbers. If there exists a permutation \( (i, j, k) \in \{(1,2,3), (1,3,2), (3,2,1)\} \) such that
\[
(-2x_k + x_i + x_j)x_4 = 2x_i x_j - x_k(x_i + x_j),
\]
then the group \( G_E \) is isomorphic to the dihedral group \( D_4 \) of order 8. Otherwise, \( G_E \) is isomorphic to the Klein four-group \( V_4 \).
\end{corollary}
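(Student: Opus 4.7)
The plan is to reduce the corollary to the main theorem by transporting the generic four-point set $E = \{x_1, x_2, x_3, x_4\} \subset \mathbb{Q}$ to the standard configuration $\{\infty, 0, 1, \lambda\}$ and then tracking when $\lambda$ falls in the exceptional locus. First I would observe that $\mathrm{char}(\mathbb{Q}) = 0$, so only cases (iii) of Theorem~\ref{main} can produce a non-$V_4$ group, and the polynomial $X^2+X+1$ has no rational roots so case (iv) is excluded. By Theorem~\ref{main}, then, $G_E$ is isomorphic to $V_4$ unless the associated cross-ratio lies in $\{-1, 2, 1/2\}$, in which case $G_E \cong D_4$.

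Next I would send $x_i, x_j, x_k$ to $\infty, 0, 1$ respectively via a (unique) homography $\varphi \in \mathrm{PGL}_2(\mathbb{Q})$; this $\varphi$ conjugates $G_E$ onto $\mathcal{G}_\lambda$ where $\lambda = \varphi(x_4) = \frac{(x_4 - x_j)(x_k - x_i)}{(x_4 - x_i)(x_k - x_j)}$ is the cross-ratio $[x_i, x_j ; x_k, x_4]$. Varying the choice of the three points sent to $\infty, 0, 1$ (keeping $x_4$ as the fourth) produces the six values of the anharmonic orbit $\{\lambda, 1-\lambda, 1/\lambda, 1/(1-\lambda), \lambda/(\lambda-1), (\lambda-1)/\lambda\}$. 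Since $\{-1, 2, 1/2\}$ is one full orbit under this $\mathfrak{S}_3$-action, the condition "$\lambda \in \{-1, 2, 1/2\}$ for some choice" collapses to "$\lambda = -1$ for some choice", i.e., some cross-ratio of the four points equals $-1$.

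Then I would carry out the harmonic-conjugate computation: $[x_i, x_j ; x_k, x_4] = -1$ expands, after clearing denominators, to
\[
(x_4 - x_j)(x_k - x_i) + (x_4 - x_i)(x_k - x_j) = 0,
\]
which upon grouping the terms involving $x_4$ becomes exactly
\[
(-2x_k + x_i + x_j) x_4 = 2 x_i x_j - x_k(x_i + x_j).
\]
Swapping $i$ and $j$ leaves this equation unchanged (it sends $\lambda$ to $1/\lambda$, and $-1$ is fixed by this involution), so among the six permutations of $\{1,2,3\}$ in the slots $(i,j,k)$ only three distinct equations arise, one for each value of $k \in \{1,2,3\}$; the listed triples $(1,2,3), (1,3,2), (3,2,1)$ realize $k=3, 2, 1$ respectively and therefore exhaust all distinct cases.

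Finally, I would argue the $V_4$-symmetry reduction that justifies fixing $x_4$ in the fourth position: the Klein subgroup $V_4 \leq \mathfrak{S}_4$ acts trivially on cross-ratios, and each of its orbits on the $24$ orderings contains a representative with $x_4$ in the last slot; hence "some cross-ratio of the four points equals $-1$" is equivalent to "some cross-ratio of the form $[x_i, x_j; x_k, x_4]$ equals $-1$", which by the previous step is equivalent to the displayed equation holding for some $(i,j,k)$ in the given list. Combining this with the reduction $G_E \cong \mathcal{G}_\lambda$ and Theorem~\ref{main} yields the dichotomy. The only real obstacle is the bookkeeping in the third paragraph — verifying that the three tabulated permutations genuinely cover all orbits of the cross-ratio equal to $-1$ — and this is handled cleanly by the $V_4$-triviality together with the $i \leftrightarrow j$ symmetry of the harmonic equation.
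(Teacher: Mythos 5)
Your proposal is correct and follows essentially the same route as the paper: reduce to $\mathcal{G}_\lambda$ via the conjugation of Lemma~\ref{lemma222}, use $\mathrm{char}(\mathbb{Q})=0$ and the irreducibility of $X^2+X+1$ over $\mathbb{Q}$ to isolate case (iii) of Theorem~\ref{main}, and translate the cross-ratio condition $\lambda\in\{-1,2,1/2\}$ into the three listed polynomial identities. The only difference is organizational: you obtain all three identities from the single harmonic condition $[x_i,x_j,x_k,x_4]=-1$ by permuting the orderings (making explicit the $V_4$-invariance and the $i\leftrightarrow j$ symmetry that the paper leaves implicit), whereas the paper expands the three values $-1$, $2$, $1/2$ of the fixed cross-ratio $[x_1,x_2,x_3,x_4]$ directly and simply asserts the correspondence with the triples $(1,2,3)$, $(1,3,2)$, $(3,2,1)$.
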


\section{Description of \texorpdfstring{$G_E$}{GE} for subsets \texorpdfstring{$E$}{E} with cardinality \texorpdfstring{$\leq 4$}{<= 4}}

In this section, we explore the existence and explicit construction of \( G_E \) associated with a given subset \( E \subseteq \mathbb{P}^1(\K) \) of cardinality at most 4. The following lemma characterizes homographies on the projective line \( \mathbb{P}^1(\K) \). A more general version of this result appears as Proposition 5.6 in \cite{michelle}.

\begin{lemma} \label{lemma1}
    Let \( x_1, x_2, x_3 \in \mathbb{P}^1(\K) \) and \( y_1, y_2, y_3 \in \mathbb{P}^1(\K) \) be two triples of distinct elements. Then there exists a unique homography \( h : \mathbb{P}^1(\K) \to \mathbb{P}^1(\K) \) such that \( h(x_i) = y_i \) for \( i = 1, 2, 3 \).
\end{lemma}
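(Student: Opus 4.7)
The plan is to reduce to the case where one of the triples is the standard triple $(0, 1, \infty)$, prove the lemma in that case, and then glue the two halves by composition.

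For existence, I would first exhibit, for any triple of distinct points $(x_1, x_2, x_3)$ in $\mathbb{P}^1(\K)$, an explicit homography sending it to $(0, 1, \infty)$. When $x_1, x_2, x_3 \in \K$, the cross-ratio formula
\[
h(z) = \frac{(z - x_1)(x_2 - x_3)}{(z - x_3)(x_2 - x_1)}
\]
does the job, and the condition $ad - bc \neq 0$ is verified directly from the distinctness of the $x_i$. The three exceptional cases in which one of the $x_i$ equals $\infty$ must be treated by writing down the analogous simpler formulas, for instance $h(z) = (z - x_1)/(x_2 - x_1)$ when $x_3 = \infty$, and similarly for $x_1 = \infty$ and $x_2 = \infty$. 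Applying the same procedure to $(y_1, y_2, y_3)$ to obtain a homography $g$ with $g(y_i) = $ the standard triple, the composition $g^{-1} \circ h$ is the desired map.

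For uniqueness, I would argue that any homography $h \in \mathrm{PGL}_2(\K)$ fixing three distinct points of $\mathbb{P}^1(\K)$ is the identity. Write $h(z) = (az + b)/(cz + d)$. The fixed points in $\K$ satisfy the quadratic equation $cz^2 + (d - a)z - b = 0$, while $\infty$ is fixed precisely when $c = 0$. Case analysis on whether $\infty$ is among the fixed points (one subcase gives a quadratic with three distinct roots in $\K$, hence identically zero; the other subcase forces $c = 0$ together with two distinct roots of $(d-a)z - b = 0$, again forcing the polynomial to vanish) yields $c = 0$, $d = a$, $b = 0$ in every situation, so $h$ is the identity in $\mathrm{PGL}_2(\K)$. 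Given any two homographies $h_1, h_2$ both sending $x_i$ to $y_i$, the composition $h_2^{-1} \circ h_1$ fixes $x_1, x_2, x_3$ and is therefore the identity, giving $h_1 = h_2$.

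The only real obstacle is bookkeeping around the point at infinity, both in the explicit construction (where the cross-ratio formula must be adapted when some $x_i = \infty$) and in the uniqueness argument (where the analysis splits according to whether $\infty$ is among the three fixed points). No conceptual difficulty arises; the argument uses only that a nonzero polynomial of degree at most two over the commutative field $\K$ has at most two roots. Since the claim is a classical fact and the author explicitly refers to Proposition 5.6 of \cite{michelle} for a more general version, I would expect the write-up here to be short, perhaps even a pointer plus a one-line reminder of the cross-ratio construction.
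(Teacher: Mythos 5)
Your proof is correct, and in fact the paper gives no proof of this lemma at all --- exactly as you anticipated, it states the lemma and points to Proposition 5.6 of \cite{michelle}. Your argument (reducing both triples to $(0,1,\infty)$ by explicit cross-ratio-type formulas with the $\infty$ cases handled separately, plus uniqueness from the observation that a homography fixing three distinct points forces the polynomial $cz^2+(d-a)z-b$ to vanish identically) is the standard sharp-three-transitivity proof underlying that reference, so it matches the intended justification rather than diverging from it.
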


\begin{remark} \label{remk1}
Let \( E \subset \mathbb{P}^1(\mathbb{K}) \). If \( E \) contains exactly three elements, then \( G_E \cong \mathfrak{S}_3 \), the symmetric group on three elements, regardless of whether \( \mathbb{K} \) is a finite or infinite field.

If \( \mathbb{K} \) is infinite and \( |E| < 3 \), then \( G_E \) is infinite.

Now suppose \( \mathbb{K} \) is a finite field with \( q \) elements:

– If \( E \) consists of a single element, then \( G_E \) is in bijection with the set \( (\mathbb{K} \times \mathbb{K}) \setminus \Delta \), where \( \Delta \) is the diagonal of \( \mathbb{K} \times \mathbb{K} \). Hence, \( G_E \) has \( q^2 - q \) elements. For instance, if \( E = \{\infty\} \), there is a bijection that sends \( \sigma \in G_E \) to the pair \( (\sigma(0), \sigma(1)) \).

– If \( E \) contains two elements, then \( G_E \) is in bijection with the direct product \( (\mathbb{Z}/2\mathbb{Z}) \times (\mathbb{K} \setminus \{0\}) \), so \( G_E \) has \( 2(q - 1) \) elements. For instance, if \( E = \{\infty, 0\} \), there is a bijection that maps \( \sigma \in G_E \) to \( (\sigma|_E, \sigma(1)) \).
\end{remark}

\begin{exa} \label{example1}
For \( F = \{ \infty, 0, 1 \} \), we have
\[
G_F =\left\{ z,\ \ 1/z,\ \ 1-z,\ \ 1/(1-z),\ \ (z-1)/z, \ \ z/(z-1) \right\} .
\]
\end{exa}

\begin{lemma}\cite[Theorem 29]{samuel}\label{lemma2.2}
A homography \( h: \mathbb{P}^1(\K) \to \mathbb{P}^1(\K) \) is an involution if and only if there exists an element \( x \in \mathbb{P}^1(\K) \) such that \( h(x) \ne x \) and \( h^2(x) = x \).
\end{lemma}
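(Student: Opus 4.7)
The plan is to handle the two directions separately. The forward direction is essentially definitional: if $h$ is an involution, then $h \neq \mathrm{id}$ and $h^2 = \mathrm{id}$, so some $x \in \mathbb{P}^1(\K)$ satisfies $h(x) \neq x$, and then $h^2(x) = x$ is automatic.

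The content is in the reverse direction. Starting from a point $x$ with $h(x) \neq x$ and $h^2(x) = x$, I set $y := h(x)$; then $h(y) = h^2(x) = x$, so $h$ swaps the two distinct points $x$ and $y$. My strategy is to simplify the picture by conjugation. By Lemma~\ref{lemma1}, there exists a homography $\phi$ with $\phi(x) = 0$ and $\phi(y) = \infty$ (the third required value may be chosen arbitrarily). Let $g := \phi \circ h \circ \phi^{-1}$. This is a homography, and a direct check gives $g(0) = \phi(h(x)) = \phi(y) = \infty$ and $g(\infty) = \phi(h(y)) = \phi(x) = 0$. Since conjugation is an isomorphism, $h^2 = \mathrm{id}$ is equivalent to $g^2 = \mathrm{id}$, so the problem reduces to showing that any homography $g$ swapping $0$ and $\infty$ is an involution.

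This last step is a direct computation. Writing $g(z) = (az+b)/(cz+d)$ with $ad-bc \neq 0$, the condition $g(\infty) = 0$ forces $a = 0$, while $g(0) = \infty$ forces $d = 0$. The non-degeneracy then forces $b, c \neq 0$, so $g(z) = k/z$ with $k := b/c \in \K^\ast$, and clearly $g(g(z)) = k/(k/z) = z$. Hence $g^2 = \mathrm{id}$, and therefore $h^2 = \phi^{-1} \circ g^2 \circ \phi = \mathrm{id}$.

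I do not expect any real obstacle: once one thinks to conjugate $\{x,y\}$ to the distinguished pair $\{0,\infty\}$, the argument collapses to a one-line matrix check. A more hands-on alternative would be to work directly with the coefficients of $h$ and derive $a + d = 0$ from $h(x) = y$ and $h(y) = x$, then conclude via Cayley--Hamilton that $h^2 = \mathrm{id}$; but that approach forces a separate treatment of the cases $x = \infty$ or $y = \infty$, which the conjugation trick absorbs into a single uniform statement.
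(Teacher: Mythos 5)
Your proof is correct. There is nothing in the paper to compare it against line by line: the paper does not prove this lemma but imports it from \cite[Theorem 29]{samuel}, so your argument serves as a self-contained substitute, and it fits the paper's toolkit well since it only uses Lemma~\ref{lemma1}, which is already stated. Both directions check out: the forward direction is definitional, and in the reverse direction the reduction $g = \phi \circ h \circ \phi^{-1}$ with $g(0) = \infty$, $g(\infty) = 0$ correctly forces $a = d = 0$, hence $g(z) = k/z$ with $k \in \K^{\ast}$, which is visibly an involution; conjugating back gives $h^2 = \mathrm{id}$, and $h \neq \mathrm{id}$ comes for free from $h(x) \neq x$. One point you could make explicit when invoking Lemma~\ref{lemma1}: you need a third source point distinct from $x, y$ and a third target distinct from $0, \infty$, which exist because $\mathbb{P}^1(\K)$ has at least three points over any field (including $\mathbb{F}_2$), so the choice of $\phi$ is legitimate. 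For comparison, the classical route --- essentially the one in Samuel, and the one you sketch as an alternative --- is the coefficient computation: $h(x) = y$ and $h(y) = x$ with $x \neq y$ force $a + d = 0$, and Cayley--Hamilton then gives $M^2 = -(ad - bc)I$, a scalar matrix, so $h^2 = \mathrm{id}$ in $\mathrm{PGL}_2(\K)$; that version identifies involutions intrinsically as the trace-zero homographies, while your conjugation trick buys a uniform treatment with no case split at $\infty$, at the mild cost of invoking three-point transitivity.
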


\begin{proposition}\label{pro:4.3.5}
    For any set \( E \) of cardinality 4 in the projective line \( \mathbb{P}^1(\mathbb{K}) \), the group \( G_E \) contains a Klein four-group.
\end{proposition}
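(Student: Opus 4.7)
The plan is to exhibit, inside $G_E$, three non-identity involutions together with the identity that form a Klein four-group. Write $E = \{a,b,c,d\}$. The key observation is that the restriction map $G_E \to \mathrm{Sym}(E)$ is an injective group homomorphism: any homography fixing three points of $E$ must be the identity by the uniqueness part of Lemma \ref{lemma1}. So it suffices to produce three homographies in $G_E$ whose induced permutations on $E$ are the three double transpositions $(ab)(cd)$, $(ac)(bd)$, $(ad)(bc)$, since these, with the identity, form the Klein four-group in $\mathrm{Sym}(E)$.

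To build such homographies, I would invoke the existence part of Lemma \ref{lemma1} three times. For the first involution, let $h_1$ be the unique homography with $h_1(a) = b$, $h_1(b) = a$, $h_1(c) = d$. Then $h_1(a) = b \neq a$ and $h_1^2(a) = a$, so Lemma \ref{lemma2.2} gives that $h_1$ is an involution. Being an involution with $h_1(c) = d$ forces $h_1(d) = c$, so $h_1$ stabilizes $E$ and acts as $(ab)(cd)$. The same construction with the pairs reshuffled produces $h_2, h_3 \in G_E$ acting as $(ac)(bd)$ and $(ad)(bc)$ respectively.

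These three homographies are pairwise distinct (their images in $\mathrm{Sym}(E)$ are distinct), each is of order $2$, and the product of any two of their images in $\mathrm{Sym}(E)$ equals the third; by injectivity of the restriction map the same relations hold in $G_E$. Therefore $\{\mathrm{id}, h_1, h_2, h_3\}$ is a subgroup of $G_E$ isomorphic to $V_4$.

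There is no real obstacle in this argument: the work is packaged entirely inside Lemma \ref{lemma1} (existence and uniqueness of a homography on three points) and Lemma \ref{lemma2.2} (involution criterion). The only thing to be mildly careful about is to note that the construction is characteristic-free, since it never requires dividing or solving an equation beyond the general position hypothesis already built into Lemma \ref{lemma1}.
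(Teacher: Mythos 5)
Your proof is correct and follows essentially the same route as the paper: both construct the three involutions $h_1,h_2,h_3$ via the existence part of Lemma~\ref{lemma1} together with the involution criterion of Lemma~\ref{lemma2.2}, and adjoin the identity to obtain $V_4$. Your explicit remark that the restriction map $G_E \to \mathrm{Sym}(E)$ is injective is a welcome touch, as it cleanly justifies the relation $h_3 = h_2 \circ h_1 = h_1 \circ h_2$ that the paper asserts more tersely.
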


\begin{proof}
Assume that \( E = \{x_1, x_2, x_3, x_4\} \), where \( x_1, x_2, x_3, x_4 \) are distinct elements of \( \mathbb{K} \). We construct four distinct homographies that stabilize \( E \) and form a Klein four-group. For simplicity, let \( \beta = (x_1, x_2, x_3, x_4) \).

(a) The homography \( h_0 \) that maps \( \beta \) to itself is the identity homography, as it has more than two fixed points.

(b) According to Lemma~\ref{lemma1}, there exists a unique homography \( h_1 \) which satisfies \( h_1(x_1) = x_2 \), \( h_1(x_2) = x_1 \), and \( h_1(x_3) = x_4 \). Moreover, since \( h_1^2(x_1) = x_1 \), it follows from Lemma \ref{lemma2.2} that \( h_1 \) is an involution, hence \( h_1(x_4) = x_3 \). Therefore, \( h_1 \) maps \( \beta \) to \( (x_2, x_1, x_4, x_3) \).

(c) Similarly, there exists a homography \( h_2 \) that maps \( \beta \) to \( (x_3, x_4, x_1, x_2) \), with \( h_2(x_1) = x_3 \) and \( h_2^2 = \mathrm{id} \).

(d) By the same reasoning, there exists a homography \( h_3 \) mapping \( \beta \) to \( (x_4, x_3, x_2, x_1) \), such that \( h_3(x_1) = x_4 \) and \( h_3^2 = \mathrm{id} \).

By the definition of the homographies \( h_i \), for \( i = 0,1,2,3 \), we have \( h_3 = h_2 \circ h_1 = h_1 \circ h_2 \). Thus, the set
\[
J = \{ h_0, h_1, h_2, h_3 \}
\]
forms a Klein four-group.
\end{proof}

The homographies \( h_0, h_1, h_2, h_3 \) define bijective maps from \( E \) to \( E \). We can identify them with elements of the symmetric group \( \mathfrak{S}_4 \), with:
\[
h_0 = \text{Id}, \quad h_1 = (1 \ 2)(3 \ 4), \quad h_2 = (1 \ 3)(2 \ 4), \quad h_3 = (1 \ 4)(2 \ 3).
\]
As stated in \cite[Theorem 4.4.1]{alan}, the group \( J \) is a subgroup of \( \mathfrak{S}_4 \) with index 6. The representatives of the classes of \( \mathfrak{S}_4 / J \) are:
\[
\text{Id}, \quad (3 \ 4), \quad (2 \ 3), \quad (2 \ 4), \quad (2 \ 3 \ 4), \quad (2 \ 4 \ 3).
\]
\begin{defn}\label{defn:3.4.12}
Let \( x_1, x_2, x_3, x_4 \in \mathbb{P}^1(\mathbb{K}) \), where \( x_1, x_2, x_3 \) are distinct. Consider the unique homographic transformation \( \varphi \) on \( \mathbb{P}^1(\mathbb{K}) \) such that
\[
\varphi(x_1) = \infty, \quad \varphi(x_2) = 0, \quad \text{and} \quad \varphi(x_3) = 1.
\]
We define the cross-ratio of the quadruple \( (x_1, x_2, x_3, x_4) \) as the element \( \varphi(x_4) \in \mathbb{P}(\mathbb{K}) \), and denote it by
\(
[x_1, x_2, x_3, x_4].
\)
\end{defn}

If \( h \in \mathrm{PGL}_2(\mathbb{K}) \), then
\[
\varphi \circ h^{-1}(h(x_1)) = \infty, \quad \varphi \circ h^{-1}(h(x_2)) = 0, \quad \text{and} \quad \varphi \circ h^{-1}(h(x_3)) = 1,
\]
so that
\[
[x_1, x_2, x_3, x_4] = \varphi(x_4) = \varphi \circ h^{-1}(h(x_4)) = [h(x_1), h(x_2), h(x_3), h(x_4)].
\]
This means that the transformation \( h \) preserves the cross-ratio.

Furthermore, we still suppose \( x_1, x_2, x_3 \) are three distinct elements of \( \mathbb{K} \), and let \( \omega \in \mathbb{P}^1(\mathbb{K}) \). Consider the homography \( f \) defined by
\[
f(\omega) = 
\begin{cases}
 \dfrac{\omega - x_2}{\omega - x_1} \div  \dfrac{x_3 - x_2}{x_3 - x_1}, & \text{if } \omega \in \mathbb{K}, \\[1.5ex]
\dfrac{x_3 - x_1}{x_3 - x_2}, & \text{if } \omega = \infty.
\end{cases}
\]
Therefore, \( f \) satisfies:
\[
f(x_1) = \infty, \quad f(x_2) = 0, \quad f(x_3) = 1.
\]
By Definition~\ref{defn:3.4.12}, it follows that for all \( \omega \in \mathbb{P}^1(\mathbb{K}) \), we have:
\[
f(\omega) = [x_1, x_2, x_3, \omega].
\]

\begin{lemma}\cite[Proposition 6.2]{michelle} \label{lemma23}
Let \( x_{1}, x_{2}, x_{3}, x_{4} \in \mathbb{P}^1(\mathbb{K}) \) and \( y_{1}, y_{2}, y_{3}, y_{4} \in \mathbb{P}^1(\mathbb{K}) \) be four distinct elements in each set. Then, there exists a homography \( h \in \mathrm{PGL}_{2}(\mathbb{K}) \) such that \( h(x_i) = y_i \) for \( i = 1,2,3,4 \) if and only if
\[
[x_1, x_2, x_3, x_4] = [y_1, y_2, y_3, y_4].
\]
\end{lemma}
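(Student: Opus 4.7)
My plan is to prove the two implications separately, relying on the cross-ratio invariance discussion and on Lemma~\ref{lemma1}, both of which immediately precede the statement.

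For the forward direction, suppose $h \in \mathrm{PGL}_2(\mathbb{K})$ satisfies $h(x_i) = y_i$ for $i = 1,2,3,4$. The computation performed just after Definition~\ref{defn:3.4.12} shows that every homography preserves the cross-ratio, so
\[
[y_1, y_2, y_3, y_4] = [h(x_1), h(x_2), h(x_3), h(x_4)] = [x_1, x_2, x_3, x_4].
\]
This half is therefore essentially a citation of the cross-ratio invariance identity and requires no further argument.

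For the reverse direction, assume $[x_1,x_2,x_3,x_4] = [y_1,y_2,y_3,y_4]$. By Lemma~\ref{lemma1} applied to the triples $(x_1,x_2,x_3)$ and $(y_1,y_2,y_3)$, there is a unique homography $h \in \mathrm{PGL}_2(\mathbb{K})$ with $h(x_i) = y_i$ for $i = 1,2,3$. It remains to show $h(x_4) = y_4$. Applying cross-ratio invariance to $h$ gives
\[
[y_1, y_2, y_3, h(x_4)] = [h(x_1), h(x_2), h(x_3), h(x_4)] = [x_1,x_2,x_3,x_4] = [y_1,y_2,y_3,y_4].
\]
Now I invoke the explicit formula for $f(\omega) = [y_1, y_2, y_3, \omega]$ displayed before the lemma: this map is a homography in $\omega$, hence a bijection of $\mathbb{P}^1(\mathbb{K})$. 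In particular it is injective, so $[y_1,y_2,y_3,h(x_4)] = [y_1,y_2,y_3,y_4]$ forces $h(x_4) = y_4$, finishing the proof.

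Neither direction presents a genuine obstacle: the entire content is packaged into (i) the invariance of the cross-ratio under $\mathrm{PGL}_2(\mathbb{K})$, already derived, and (ii) the injectivity of the map $\omega \mapsto [y_1,y_2,y_3,\omega]$, which is immediate from the closed-form formula for $f$ given before the lemma. The only point that might be worth emphasizing in writing is that this injectivity step is precisely what rules out the possibility of having a different preimage of $y_4$ than $x_4$, i.e.\ that equality of cross-ratios pins down the fourth point uniquely once the first three are matched.
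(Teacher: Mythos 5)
Your proof is correct, but note that the paper itself contains no proof of this lemma to compare against: it is stated with a citation to \cite[Proposition~6.2]{michelle} and taken as known. Your write-up supplies the standard argument that one would find in such a reference: the forward implication is the cross-ratio invariance identity derived just after Definition~\ref{defn:3.4.12}, and the reverse implication combines Lemma~\ref{lemma1} (to match the first three points) with the injectivity of $\omega \mapsto [y_1,y_2,y_3,\omega]$ (to force the fourth). Both steps are sound, and your closing remark correctly identifies the injectivity of the fourth-argument map as the only point carrying real content. One small repair is worth making: you justify that injectivity by appealing to the explicit closed-form formula for $f(\omega)$ displayed before the lemma, but that formula was written under the hypothesis that the three reference points lie in $\mathbb{K}$, i.e.\ none of them is $\infty$ --- and in your application nothing prevents one of $y_1, y_2, y_3$ from being $\infty$. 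The cleaner justification, valid in all cases, comes directly from Definition~\ref{defn:3.4.12}: the map $\omega \mapsto [y_1,y_2,y_3,\omega]$ \emph{is} by definition the homography $\varphi$ determined by $\varphi(y_1)=\infty$, $\varphi(y_2)=0$, $\varphi(y_3)=1$, hence a bijection of $\mathbb{P}^1(\mathbb{K})$, with no formula needed. With that substitution your argument is complete and fully general.
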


Moreover, as noted in Section~2.2 of \cite{samuel}, given four distinct points \( x_1, x_2, x_3, x_4 \in \mathbb{P}^1(\mathbb{K}) \), and a permutation \( \sigma \) of \(\{1,2,3,4\}\), the cross-ratio \( [x_{\sigma(1)}, x_{\sigma(2)}, x_{\sigma(3)}, x_{\sigma(4)}] \) is uniquely determined by \( [x_1, x_2, x_3, x_4] \), the characteristic of \( \mathbb{K} \), and the permutation \( \sigma \).  
Therefore, there are six equivalence classes of \(\mathfrak{S}_4\) modulo \(J\). The following proposition provides further details on the required conditions.

\begin{figure}[H]
    \centering
    \includegraphics[width=0.9\linewidth]{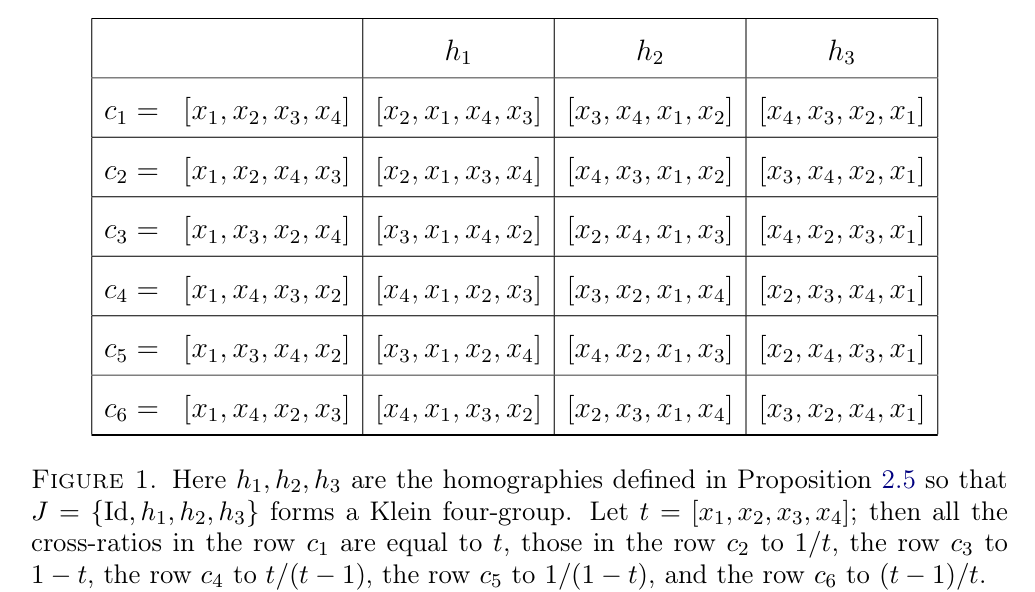}
\end{figure}

Notice that if we still consider \(x_1, x_2, x_3, x_4\) to be four distinct elements of \(\mathbb{P}^1(\K)\), then according to row \(c_1\) of Figure~1, we have the equality:
\[
[x_1,x_2,x_3,x_4] = [x_2,x_1,x_4,x_3] = [x_3,x_4,x_1,x_2] = [x_4,x_3,x_2,x_1].
\]
Therefore, by Lemma~\ref{lemma23}, there exist three distinct and non-trivial homographies that respectively map the quadruple \((x_1,x_2,x_3,x_4)\) to \((x_2,x_1,x_4,x_3)\), \((x_3,x_4,x_1,x_2)\), and  \((x_4,x_3,x_2,x_1)\).
These correspond respectively to the homographies \(h_1\), \(h_2\), and \(h_3\) introduced in Proposition~\ref{pro:4.3.5}. This observation offers an alternative proof for Proposition~\ref{pro:4.3.5}. Also, from Lemma~\ref{lemma23}, one deduces that the index of \(\mathcal{G}_{\lambda}\) in the symmetric
 group \(\mathfrak{S}_4\) is the number of distinct elements in \( \{c_1, \cdots
 ,c_6\} \) when \((x_1,x_2,x_3,x_4)= (\infty, 0, 1, \lambda)\).\\
The following proposition complements Theorem 26 of \cite{samuel} by providing further details about the exception mentioned.

\begin{proposition} \label{prop22}
Let \(\mathbb{K}\) be a field different from \(\mathbb{F}_2\), and let \(\lambda \in \mathbb{K}\) such that \(\lambda \neq 0\) and \(\lambda \neq 1\). Setting \((x_1, x_2, x_3, x_4) = (\infty, 0, 1, \lambda)\), and following the notation in Figure 1, we distinguish the following cases:
\begin{enumerate}[label=(\roman*), leftmargin=*, itemsep=1ex]
  \item If \(\mathrm{char}(\mathbb{K}) = 3\), then \(\lambda = -1\) iff \(c_1 = c_2 = c_3 = c_4 = c_5 = c_6\).

  \item If \(\mathrm{char}(\mathbb{K}) = 2\), and the polynomial \(X^2 + X + 1\) splits in \(\mathbb{K}[X]\), then \(\lambda \in \{ j, j^2 \}\) iff \(c_1 = c_5 = c_6\).

  \item If \(\mathrm{char}(\mathbb{K}) \neq 2\) and \(\mathrm{char}(\mathbb{K}) \neq 3\), and the polynomial \(X^2 + X + 1\) does not split in \(\mathbb{K}[X]\), then \(\lambda \in \{ -1, 2, 1/2 \}\) iff \(c_1 = c_2\), or \(c_1 = c_3\), or \(c_1 = c_4\).

  \item If \(\mathrm{char}(\mathbb{K}) \neq 2\) and \(\mathrm{char}(\mathbb{K}) \neq 3\), and the polynomial \(X^2 + X + 1\) splits in \(\mathbb{K}[X]\), then \(\lambda \in \{ -1, 1/2, 2, -j, -j^2 \}\) iff \(c_1 = c_2\), or \(c_1 = c_3\), or \(c_1 = c_4\), or \(c_1 = c_5 = c_6\).
\end{enumerate}
In all other cases, the 6 elements \(c_1 \cdots, c_6\) are pairwise distinct.
\end{proposition}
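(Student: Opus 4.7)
The plan is to translate each possible coincidence $c_i = c_j$ into a polynomial equation in $\lambda$ and then classify the solutions according to the characteristic of $\K$.

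First, I would write out the six cross-ratios associated with the coset representatives of $\mathfrak{S}_4/J$ as rational functions of $\lambda$, namely
$\lambda,\ 1/\lambda,\ 1-\lambda,\ \lambda/(\lambda-1),\ 1/(1-\lambda),\ (\lambda-1)/\lambda$. For each of the $\binom{6}{2}=15$ unordered pairs $\{i,j\}$, the equation $c_i=c_j$, cleared of denominators, becomes a polynomial relation in $\lambda$. A direct inspection shows that these 15 relations reduce to just four inequivalent conditions (modulo the excluded values $\lambda=0,1$): $\lambda=-1$, $\lambda=1/2$, $\lambda=2$, and $\lambda^2-\lambda+1=0$. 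This reduction reflects the fact that the group $J$ acts on the set of pairs $\{c_i,c_j\}$ and that, on each $J$-orbit, the defining polynomial equation is the same; I would tabulate once and for all which pair-equalities correspond to which of the four polynomial conditions.

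Next, I split into cases according to $\mathrm{char}(\K)$. In characteristic $3$, one has $-1=1/2=2$ and $\lambda^2-\lambda+1=(\lambda+1)^2$, so all four equations collapse to the single condition $\lambda=-1$; hence this single value of $\lambda$ forces every one of the 15 pair-equalities simultaneously, giving $c_1=\cdots=c_6$, which is (i). In characteristic $2$, the equations $\lambda=-1$ and $\lambda=2$ degenerate to the excluded values $\lambda=1$ and $\lambda=0$, the equation $\lambda=1/2$ is vacuous (since $2=0$), and $\lambda^2-\lambda+1=\lambda^2+\lambda+1$ has roots exactly $\{j,j^2\}$ when the polynomial $X^2+X+1$ splits. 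Reading off the $J$-orbit of pairs attached to this last equation yields precisely $c_1=c_5=c_6$ (and the companion coincidence $c_2=c_3=c_4$), which is (ii).

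For $\mathrm{char}(\K)\notin\{2,3\}$, the three values $-1$, $1/2$, and $2$ are pairwise distinct and $\lambda^2-\lambda+1=0$ has a root iff $X^2+X+1$ splits, because the substitution $\mu=-\lambda$ sends $\lambda^2-\lambda+1$ to $\mu^2+\mu+1$; the roots are then $-j$ and $-j^2$. A direct substitution verifies that $\lambda=-1$, $\lambda=1/2$, $\lambda=2$, and $\lambda\in\{-j,-j^2\}$ give, respectively, the coincidences $c_1=c_2$, $c_1=c_3$, $c_1=c_4$, and $c_1=c_5=c_6$. This handles (iii) and (iv). The final "pairwise distinct" claim follows because any coincidence would force one of the four polynomial equations, and those have no solutions outside the values already enumerated.

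The main obstacle is purely bookkeeping: one must correctly identify the $J$-orbits on pairs so as to group the 15 equations into the four canonical families, and then, especially in characteristic $3$, handle the simultaneous collapse carefully so as neither to miss coincidences (because $\lambda=-1$ satisfies all four equations at once) nor to introduce spurious ones. Once these orbits are tabulated, the characteristic-by-characteristic analysis is routine.
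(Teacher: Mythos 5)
Your proposal is correct and takes essentially the same approach as the paper: the paper's proof likewise reduces all coincidences among \(c_1,\dots,c_6\) to the equations \(t = 1/t\), \(t = 1-t\), \(t = t/(t-1)\), \(t = 1/(1-t)\), \(t = (t-1)/t\) and solves them characteristic by characteristic, arriving at the same four conditions \(\lambda = -1\), \(\lambda = 1/2\), \(\lambda = 2\), \(\lambda^2 - \lambda + 1 = 0\). Your explicit tabulation of all fifteen pair-equalities into four polynomial families merely makes rigorous a symmetry the paper leaves implicit by only comparing \(c_1\) with the other five values.
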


\begin{proof}
It suffices to solve the following equations in \(\mathbb{K}- \{0, 1\}\);
\begin{equation} \label{eqt}
    t = \frac{1}{t}, \quad t = 1 - t, \quad t = \frac{t}{t-1}, \quad t = \frac{1}{1-t}, \quad t = \frac{t-1}{t}.
\end{equation}

The solutions to these equations depend on the characteristic of \(\mathbb{K}\):

\begin{enumerate}[label=, leftmargin=*, itemsep=1ex]
    \item If \(\mathrm{char}(\mathbb{K}) = 3\), then all the equations in~\eqref{eqt} have \(\{-1\}\) as solution. This justifies case~(i).

    \item If \(\mathrm{char}(\mathbb{K}) = 2\), then only the fourth and fifth equations from~\eqref{eqt} have solutions, which are for both cases \(\{j, j^2\}\). If \(\lambda \in \{ j, j^2 \} \), then \( c_1 = c_5 = c_6 = \lambda \), and \( c_2 = c_3 = c_4 = \lambda^2 \).  This justifies case~(ii).

    \item If \(\mathrm{char}(\mathbb{K}) \neq 2\) and \(\mathrm{char}(\mathbb{K}) \neq 3\), and the polynomial \(X^2 + X + 1\) does not split in \(\mathbb{K}[X]\), then only the first, second, and third equations have solutions, which are distinct and are respectively:
    \[
    \{-1\}, \quad \left\{ \frac{1}{2} \right\}, \quad \{2\}.
    \]
     \textit{If} \( \lambda = -1 \), then \( c_1 = c_2 = -1,\, c_3 = c_6 = 2,\, c_4 = c_5 = 1/2 \); \\
    \textit{If} \( \lambda = 1/2 \), then \( c_1 = c_3 = 1/2,\, c_2 = c_5 = 2,\, c_4 = c_6 = -1 \);\\
  \textit{If} \( \lambda = 2 \), then \( c_1 = c_4 = 2,\, c_3 = c_5 = -1,\, c_2 = c_6 = 1/2 \).\\
 This corresponds to case~(iii).

    \item If \(\mathrm{char}(\mathbb{K}) \neq 2\) and \(\mathrm{char}(\mathbb{K}) \neq 3\), and the polynomial \(X^2 + X + 1\) splits in \(\mathbb{K}[X]\), then all the elements \(-1, 2, 1/2, -j, -j^2\) are distinct and all equations in~\eqref{eqt} have solutions, which are respectively:
    \[
    \{-1\}, \quad \left\{ \frac{1}{2} \right\}, \quad \{2\}, \quad \{-j, -j^2\}, \quad \{-j, -j^2\}.
    \] \textit{If} \( \lambda \in \{-j,\,-j^2\} \), then \( c_1 = c_5 = c_6 = \lambda,\, c_2 = c_3 = c_4 = 1/\lambda \). This justifies case~(iv).
\end{enumerate}
\end{proof}

Now we have all the necessary tools to prove Theorem~\ref{main}.
\begin{proof}[Proof of Theorem~\ref{main}]
Let \(E = \{x_1, x_2, x_3, x_4\}\), with \(x_1 = \infty\), \(x_2 = 0\), \(x_3 = 1\), and \(x_4 = \lambda\). According to Proposition~\ref{pro:4.3.5}, the group \(G_E\) (also denoted \(\mathcal{G}_\lambda\)) contains a Klein four-group as described in the first row of Figure~1. 

If we assume that \(G_E\) is larger than the Klein four-group, then by Lemma~\ref{lemma23}, there exists a permutation \(\sigma\) of \(\{1, 2, 3, 4\}\) such that the cross-ratio \([x_{\sigma(1)}, x_{\sigma(2)}, x_{\sigma(3)}, x_{\sigma(4)}]\) is different from the four permutations in the row \(c_1\) of Figure~1, yet satisfies
\[
[x_{\sigma(1)}, x_{\sigma(2)}, x_{\sigma(3)}, x_{\sigma(4)}] = [x_1, x_2, x_3, x_4].
\]
This occurs only under the conditions stated in Proposition~\ref{prop22}:

\begin{enumerate}[label=(\roman*), leftmargin=*, itemsep=1ex]

 \item[(i)] If \( \mathrm{char}(\mathbb{K}) = 3 \) and \( \lambda = -1 \), the 24 cross ratios in Figure 1 are equal. By Lemma 2.7 the group \( \mathcal{G}_\lambda \) contains 24 elements, hence is the full symmetric group \( \mathfrak{S}_4 \) of order 24.
  
  \item[(ii)] If \( \mathrm{char}(\mathbb{K}) = 2 \) and the polynomial \( X^2 + X + 1 \) splits in \( \mathbb{K}[X] \), and \( \lambda \in \{j, j^2\} \), then the set of cross ratios in Figure 1 contains exactly two distinct elements, namely \( j \) and \( j^2 \), hence by Lemma~\ref{lemma23} the group \( \mathcal{G}_\lambda \) is a subgroup of index 2 in \( \mathfrak{S}_4 \) and therefore it is the alternating group \( \mathfrak{A}_4 \) of order 12.
  
  \item[(iii)] If \( \mathrm{char}(\mathbb{K}) \notin \{2,3\} \) and \( \lambda \in \{-1, 2, 1/2\} \), then the set of cross ratios in Figure 1 contains exactly 3 distinct elements, namely \( -1, 2, 1/2 \), hence the group \( \mathcal{G}_\lambda \) is a subgroup of index 3 in \( \mathfrak{S}_4 \). It follows that \( \mathcal{G}_\lambda \) is a dihedral group \( D_4 \) of order 8.
  
  \item[(iv)] If \( \mathrm{char}(\mathbb{K}) \notin \{2,3\} \), and the polynomial \( X^2 + X + 1 \) splits in \( \mathbb{K} \) and \( \lambda \in \{-j, -j^2\} \), then the set of cross ratios in Figure 1 contains exactly two distinct elements, namely \( -j \) and \( -j^2 \), hence \( \mathcal{G}_\lambda \) is again the alternating group \( \mathfrak{A}_4 \) of order 12.

\end{enumerate}
\end{proof}

\begin{lemma}\label{lemma2} 
    Let \(E\) be a subset of the projective line \(\mathbb{P}^1(\K)\) consisting of 3 distinct elements. Then, the group \(G_E\) is conjugate to the group stabilizing the set \(\{\infty, 0, 1\}\).
\end{lemma}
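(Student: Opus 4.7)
The plan is to build a concrete conjugating homography directly from Lemma~\ref{lemma1} and then verify by a two-line set-chasing argument that conjugation by it carries one stabilizer onto the other.

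First, write $E = \{a, b, c\}$ with $a, b, c$ pairwise distinct. By Lemma~\ref{lemma1}, there exists a unique homography $h \in \mathrm{PGL}_2(\K)$ such that
\[
h(a) = \infty, \qquad h(b) = 0, \qquad h(c) = 1,
\]
so that $h(E) = \{\infty, 0, 1\}$. This $h$ is the candidate conjugating element.

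Next, I would prove the set equality $G_E = h^{-1}\, G_{\{\infty,0,1\}}\, h$ by double inclusion. For the inclusion $\supseteq$, take any $g \in G_{\{\infty,0,1\}}$ and note that
\[
(h^{-1} g h)(E) = h^{-1}\bigl(g(h(E))\bigr) = h^{-1}\bigl(g(\{\infty,0,1\})\bigr) = h^{-1}(\{\infty,0,1\}) = E,
\]
so $h^{-1} g h \in G_E$. Conversely, if $f \in G_E$, then the same computation with $h$ and $h^{-1}$ swapped shows that $h f h^{-1}$ stabilizes $\{\infty, 0, 1\}$, i.e.\ $f \in h^{-1} G_{\{\infty,0,1\}} h$. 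Both inclusions together give the conjugacy.

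There is essentially no obstacle here; the only thing to be careful about is making sure $h$ exists and is a genuine element of $\mathrm{PGL}_2(\K)$ (handled by Lemma~\ref{lemma1}) and that conjugation is a homomorphism on all of $\mathrm{PGL}_2(\K)$, which it trivially is since we work inside a single group. No case split on the characteristic of $\K$ or on whether any of $a, b, c$ equal $\infty$ is needed, because Lemma~\ref{lemma1} already treats $\mathbb{P}^1(\K)$ uniformly. The result may also be stated slightly more precisely: the conjugating element $h$ is unique once the bijection $E \to \{\infty,0,1\}$ is fixed, and different choices of this bijection differ by an element of $G_{\{\infty,0,1\}} \cong \mathfrak{S}_3$, so they all yield the same conjugate subgroup.
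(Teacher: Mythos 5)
Your proof is correct and follows essentially the same route as the paper: both use Lemma~\ref{lemma1} to produce the homography sending $E$ to $\{\infty,0,1\}$ and then conjugate the stabilizer by it. If anything, your version is slightly more complete, since the paper only exhibits the inclusion $h^{-1}\,G_{\{\infty,0,1\}}\,h \subseteq G_E$ and asserts equality, whereas you verify both inclusions explicitly.
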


\begin{proof}
Suppose that \(E = \{x_1, x_2, x_3\} \subset \mathbb{P}^1(\K)\). Let \(h\) be an element of \(G_F\), where \(F\) is defined as in Example~\ref{example1}. By Lemma~\ref{lemma1}, there exists a unique homography \(f\) such that
\[
f(x_1) = \infty, \quad f(x_2) = 0, \quad f(x_3) = 1.
\]
Define the homography \(j_h = f^{-1} \circ h \circ f\). Then \(j_h \in G_E\), and this construction implies that
\[
G_E = \{ j_h \mid h \in G_F \}.
\]
\end{proof}

\begin{lemma}\label{lemma222} 
Let \(\K\) be a field different from \(\mathbb{F}_2\), and let \(E\) be a subset of the projective line \(\mathbb{P}^1(\K)\) consisting of 4 distinct elements. Then, there exists an element \(\lambda \in \K -\{0,1 \}\) such that the group \(G_E\) is conjugate to the group stabilizing the set \(\{\infty, 0, 1, \lambda\}\).
\end{lemma}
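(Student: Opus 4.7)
The statement is a direct four-element analogue of Lemma~\ref{lemma2}, and the plan is to mimic that proof almost verbatim. Writing $E = \{x_1, x_2, x_3, x_4\}$, I would first invoke Lemma~\ref{lemma1} to produce the unique homography $f \in \mathrm{PGL}_2(\K)$ satisfying
\[
f(x_1) = \infty, \qquad f(x_2) = 0, \qquad f(x_3) = 1,
\]
and then set $\lambda = f(x_4)$. Since $f$ is a bijection of $\mathbb{P}^1(\K)$ and the four points $x_1,x_2,x_3,x_4$ are distinct, the value $\lambda$ is distinct from $\infty$, $0$, and $1$; hence $\lambda \in \K \setminus \{0,1\}$, as required.

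By construction $f(E) = \{\infty, 0, 1, \lambda\}$, so conjugation by $f$ sets up the desired identification. Explicitly, for any $h \in \mathcal{G}_\lambda = G_{\{\infty,0,1,\lambda\}}$, the homography $j_h := f^{-1} \circ h \circ f$ satisfies
\[
j_h(E) = f^{-1}\bigl(h(f(E))\bigr) = f^{-1}\bigl(\{\infty,0,1,\lambda\}\bigr) = E,
\]
so $j_h \in G_E$. The map $h \mapsto j_h$ is a group homomorphism with two-sided inverse $g \mapsto f \circ g \circ f^{-1}$ sending $G_E$ into $\mathcal{G}_\lambda$, yielding the equality
\[
G_E \;=\; f^{-1}\, \mathcal{G}_\lambda\, f,
\]
which is precisely the conjugacy claim.

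There is no real obstacle here: the argument is essentially formal, relying only on Lemma~\ref{lemma1} to guarantee existence of $f$ and on the fact that $f$ is a bijection of the projective line. The only point worth double-checking is that $\lambda$ really lies in $\K \setminus \{0,1\}$ (not at $\infty$), which follows from the injectivity of $f$ together with $x_4 \notin \{x_1,x_2,x_3\}$. The hypothesis $\K \neq \mathbb{F}_2$ is used only to ensure that a four-element subset $E$ exists in $\mathbb{P}^1(\K)$ in the first place.
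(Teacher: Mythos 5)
Your proposal is correct and follows essentially the same route as the paper: construct $f$ via Lemma~\ref{lemma1} with $f(x_1)=\infty$, $f(x_2)=0$, $f(x_3)=1$, set $\lambda=f(x_4)$, and conjugate $\mathcal{G}_\lambda$ by $f$ to obtain $G_E$. You are in fact slightly more careful than the paper, since you explicitly verify that $\lambda\in\K\setminus\{0,1\}$ and check the reverse inclusion $G_E\subseteq f^{-1}\,\mathcal{G}_\lambda\,f$ via the inverse conjugation, points the paper's proof leaves implicit.
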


\begin{proof}
Suppose that \(E = \{x_1, x_2, x_3, x_4\}\), and let \(f\) be the homography such that
\[
f(x_1) = \infty, \quad f(x_2) = 0, \quad f(x_3) = 1.
\]
Set \(f(x_4) = \lambda\), and consider the group \(\mathcal{G}_\lambda\) as in Theorem~\ref{main}. For any \(g \in \mathcal{G}_\lambda\), define \(j_g = f^{-1} \circ g \circ f\). Then \(j_g \in G_E\), and we have
\[
G_E = \{ j_g \mid g \in \mathcal{G}_\lambda \}.
\]
\end{proof}

\begin{proof}[Proof of Corollary \ref{mainc}]
The result is an immediate consequence of Theorem~\ref{main} together with Lemma~\ref{lemma2}.
\end{proof}

\begin{proof}[Proof of Corollary \ref{coro:4.3.7}]
This is a direct application of Theorem \ref{main} and Lemma \ref{lemma222}. Observe that \(\mathbb{Q}\) has characteristic zero and the polynomial \(X^2 + X+ 1\) does not split in $\mathbb{Q}$. Therefore, according to Theorem \ref{main} and Definition \ref{defn:3.4.12}, we have that
\[
\left(\dfrac{x_4 - x_2}{x_4 - x_1}\right) \div \left(\dfrac{x_3 - x_2}{x_3 - x_1}\right)
\]
is equal to either \(-1\), \(2\), or \(1/2\). This corresponds to the following identity:
\[
(-2x_k + x_i + x_j)x_4 = 2x_i x_j - x_k(x_i + x_j),
\]
where \((i,j,k)\) is equal to \((1,2,3)\), \((1,3,2)\), or \((3,2,1)\), respectively.
\end{proof}

\noindent\textbf{Acknowledgements:} The author expresses heartfelt gratitude to Pr. Michel Waldschmidt for his guidance at every stage of this project.

\end{document}